\documentclass[10pt]{amsart}

\usepackage{graphicx}
\usepackage{xcolor}

\usepackage[utf8]{inputenc}
\usepackage{amssymb,amsmath,amsthm,amsfonts}

\theoremstyle{plain}
\newtheorem{theorem}{Theorem}
\newtheorem{lemma}{Lemma}

\newtheorem*{lemma*}{Lemma}
\newtheorem*{theorem*}{Theorem}

\theoremstyle{definition}

\theoremstyle{remark} 
\newtheorem{claim}{Claim}
\newtheorem*{claim*}{Claim}

\newcommand{\N}{\mathbb{N}}
\newcommand{\R}{\mathbb{R}}

\DeclareMathOperator{\prob}{\mathbb{P}}
\DeclareMathOperator{\length}{len}
\DeclareMathOperator{\expval}{\mathbb{E}}

\newcommand{\equaldist}{\stackrel{(d)}{=}}

\newcommand{\res}{\upharpoonright}

\begin{document}

\title{Random continuum and Brownian motion}

\author{Viktor Kiss \and S{\l}awomir Solecki}

\thanks{Kiss' research supported by NSF grant DMS-1455272 and by the National Research, Development and Innovation Office---NKFIH, 
grants no.\,128273, no.\,124749, and no.\,129211}

\thanks{Solecki's research supported by NSF grants DMS-1800680 and DMS-1954069.} 


\address{Alfr{\'e}d R{\'e}nyi Institute of Mathematics\\ 
Re\'altanoda u. 13--15, H-1053 Budapest, Hungary} 

\email{kiss.viktor@renyi.mta.hu}

\address{Department of Mathematics\\
Cornell University\\
Ithaca, NY 14853, USA}

\email{ssolecki@cornell.edu}

\subjclass[2000]{54F15, 60J65, 60J70}

\keywords{Iterated Brownian motion, indecomposable continuum}

\begin{abstract}
We describe a probabilistic model involving iterated Brownian motion for constructing a random chainable continuum. We show that 
this random continuum is indecomposable. We use our probabilistic model to define a Wiener-type measure on the space of all chainable continua. 
\end{abstract}

\maketitle

\section{Introduction}\label{S:intro} 

In this paper, we give a description of a random chainable continuum. (The relevant topological notions are defined in Section~\ref{Su:basde}.) 
In \cite{B}, Bing hypothesized that a certain topologically important continuum may be obtained as the intersection of 
a nested sequence of chains such that each 
chain is picked as a random refinement of the previous chain in a way similar to a random walk. 
(The continuum in question is called the pseudoarc; we give some more information on it in Section~\ref{Su:basde}.)
Bing's speculation was reiterated by Prajs in 
his talk \cite{P}, in which he also pointed out that the more basic question of finding a precise probabilistic model behind Bing's description is open. 
Here, we address this question by finding a probabilistic model for constructing a random chainable continuum. 
Furthermore, we show that a random chainable continuum is indecomposable. 

Our probabilistic model can be roughly described as follows. 
Let $B = (B(t), t \in \mathbb{R})$ be a standard Brownian motion with two-sided parameter. (In the remainder 
of the paper, we will call $B$ a two-sided Brownian motion. Its definition is given in Section~\ref{Su:basde}.) 
Let $B_n$, $n \ge 1$, be independent copies of $B$.
We give a procedure for constructing a sequence of 
non-degenerate time intervals $I_n$, $n\geq 1$, all containing $0$, such that $B_n$ maps $I_{n+1}$ onto $I_{n}$, so 
$B_n\res I_{n+ 1}$ can be thought of as a random refinement of $I_{n}$. 
Now, the procedure yields a sequence 
\begin{equation}\label{E:seqin} 
I_1\xleftarrow{B_1\res I_2} I_2\xleftarrow{B_2\res I_3} I_3\xleftarrow{B_3\res I_4} I_4 \xleftarrow{B_4\res I_5}\cdots , 
\end{equation}
which gives rise to the random limit continuum, namely the inverse limit 
\begin{equation}\label{E:rali}
\varprojlim_n\, (I_n,\, B_n\res I_{n+1}).
\end{equation}
A precise definition of the inverse limit is given in \eqref{E:inlim}. Here we only point out that it is a continuum that is a subset of 
${\mathbb R}^{\N}$ whose existence is guaranteed entirely by the diagram \eqref{E:seqin}. 

In Theorem~\ref{T:model}, we extract from $(B_n)$ such a sequence $(I_n)$ of non-degenerate intervals almost surely. 
This sequence is found in a canonical way without making arbitrary choices. The canonicity of the sequence is captured by the notion of 
a sequence of continuous functions from $\mathbb R$ 
to $\mathbb R$ determining a continuum; see Section~\ref{S:tap}. 
The extraction of $(I_n)$ from $(B_n)$ is done as follows. 
We fix an arbitrary non-degenerate time interval $J$ with $0\in J$. It turns out that the sequence of intervals in each row of the following matrix 
\begin{alignat}{5}
B_1(J),\; B_1\circ \, &B_2(J),\; B_1\circ\, &B_2\circ\, &B_3(J),\;   B_1\circ\, &B_2 \circ\, &B_3\circ B_4(J),\; \dots \notag\\
&B_2(J),\; &B_2\circ\, &B_3(J),  & B_2\circ\, &B_3\circ B_4(J),\; \dots \notag\\
&&&B_3(J),&&B_3\circ B_4(J),\;\dots \notag\\\notag
&&&\;\;\;\;\vdots 
\end{alignat}
converges almost surely to a non-degenerate interval that, importantly, does not depend on $J$. The limit interval in the $n$-th row of the matrix is the interval $I_n$. 

The immediate problem that now presents itself is to characterize the homeomorphism type of the limit continuum \eqref{E:rali} for the sequence $(I_n)$ chosen as above. 
In this direction, we show that that the limit continuum is indecomposable almost surely. 

Our proofs use work \cite{CK14} on iterated Brownian motion.

\section{Basic definitions}\label{Su:basde} 

Let $\N$ stand for the set of all positive integers, in particular, by this convention, $0\not\in \N$. 

By an {\bf interval} we understand a set of the form $\{ x\in {\mathbb R}\mid a\leq x\leq b\}$, where $a,b\in {\mathbb R}$, $a\leq b$, 
so it is a closed interval. The interval is called {\bf non-degenerate} if $a<b$. 
If $I, J$ are intervals, we write 
\begin{equation}\label{E:dist}
{\rm dist}(I,J) = \max ( |\min I - \min J|,\, |\max I-\max J|). 
\end{equation}
We note that ${\rm dist}(I,J)$ is the usual Hausdorff distance between the two compact sets $I$ and $J$. For a sequence of intervals 
$(I_n)$ and an interval $J$, we write $\lim_n I_n = J$ if 
$\lim_n {\rm dist}(I_n, J)=0$. 

A {\bf continuum} is a compact connected metric space. It is {\bf non-degenerate} if it has more than one point. 
A continuum $C$ is {\bf indecomposable} provided that 
whenever it is the union of two of its subcontinua $C_1$ and $C_2$, then $C_1=C$ or $C_2=C$. 
A continuum is {\bf hereditarily indecomposable} if each of its subcontinua 
is indecomposable. A continuum $C$ is called {\bf chainable} 
if for each $\epsilon>0$ there exists a continuous function $f\colon C\to [0,1]$ such that the preimages of points under $f$ 
have diameter less than $\epsilon$. 
Equivalently, a continuum is chainable if it is the inverse limit of a sequence of intervals with continuous bonding maps. (Inverse limits 
and bonding maps are defined below.) 
Another equivalent form of this notion says that a continuum is chainable when it is the intersection of a nested sequence of chains. 
An important chainable continuum is {\bf the pseudoarc}; for a definition see \cite{N}. 
By \cite{B51}, the pseudoarc is characterized as the unique chainable, 
hereditarily indecomposable continuum. Section~\ref{Su:bing} contains additional information on it. 
See also \cite{N} for more information on continua. 

The {\bf inverse limit} $\varprojlim_n\, (I_n,\, f_n\res I_{n+1})$, where $f_n : \mathbb{R} \to \mathbb{R}$ is a continuous function and 
$I_n \subseteq \mathbb{R}$ is an interval such that $f_n(I_{n+1}) = I_n$ for each $n \ge 1$, is defined as 
\begin{equation}\label{E:inlim}
\begin{split}
  \varprojlim_n\, (I_n,\, &f_n\res I_{n+1}) = \\
  &\left\{(x_1, x_2, \dots ) \in \mathbb{R}^{\mathbb{N}}\mid \forall n \in \mathbb{N} \left(x_n \in I_n \text{ and } f_n(x_{n+1}) = x_n\right)\right\}.
  \end{split}
\end{equation}
In such a system, each function $f_n$ is called a {\bf bonding map}. 

For basic notions concerning {\bf Brownian motion}, we refer the reader to \cite[Chapter 1]{MP10}. We only mention here, see 
\cite[Definition~1.1]{MP10}, that 
a Brownian motion is a measurable 
function $B\colon \Omega\times {\mathbb R}^+\to {\mathbb R}$, where $\Omega$ is 
a probability space, such that $B(\omega, 0)=0$ for all $\omega\in \Omega$; 
the function $B(\omega, \cdot)$ is continuous, on a measure $1$ set of $\omega\in \Omega$; 
for all $t_0<\cdots <t_k$, the random variables $B(\cdot, t_i)-B(\cdot, t_{i-1})$, $i=1, \dots, k$,  are independent; and 
for $s<t$, $B(\cdot, t)-B(\cdot, s)$ 
has the same distribution as $B(\cdot, t-s)$. 
Measurability of $B$, which is checked in \cite[Exercise 1.2]{MP10}, is understood with respect to the product measure, 
where $\mathbb R$ is equipped with Lebesgue measure. 
As is customary, most of the time, we suppress the first coordinate and write $B(t)$ for $B(\omega,t)$.

Using the notation in \cite{CK14}, let $B_+ = (B_+(t), t \ge 0)$ and $B_- = (B_-(t), t \ge 0)$ be 
two independent standard one-dimensional Brownian motions. 
We call the process $B$ defined by $B(t) = B_+(t)$ if $t \ge 0$ and $B(t) = B_-(-t)$ if $t < 0$ a {\bf two-sided Brownian motion}.

\section{The theorem and its proof}\label{S:tap} 

We consider sequences $\bar{f}= (f_n)$ of continuous functions $f_n\colon {\mathbb R}\to {\mathbb R}$ with $f_n(0)=0$. 
We recall here that Brownian motion's paths are almost surely continuous functions whose value is $0$ at time $0$.
We say that the sequence $\bar{f}$ {\bf determines a continuum} 
if there exists a sequence $(I_n)$ of intervals with $0\in I_n$ such that, for each non-degenerate interval $J$ with $0\in J$, we have 
\[
I_k = \lim_n \bigl( f_k\circ \cdots \circ f_{k+n}\bigr) (J),
\]
where the limit is taken with respect to the Hausdorff metric or, equivalently, the distance defined by \eqref{E:dist}.
Note that the sequence $(I_n)$ is determined solely by $\bar{f}$. 
Additionally, observe that 
\begin{equation}\label{E:sed} 
I_k = f_k(I_{k+1}),\hbox{ for each }k\geq 1. 
\end{equation}
To see this fact, fix $k \geq 1$ and an interval $J$ with $0 \in J$. Then for the intervals 
\[
J_n = (f_{k+1}\circ f_{k+2} \circ \dots \circ f_{k+n})(J), 
\]
we have 
$J_n \to I_{k+1}$ and $f_k(J_n) \to I_k$ as $n \to \infty$. The continuity of $f_k$ implies that $f_k(J_n) \to f_k(I_{k+1})$ as $n \to \infty$, proving 
\eqref{E:sed}. 
We say that $\bar{f}$ {\bf determines a non-degenerate continuum} 
if $I_n$ is non-degenerate for all but finitely many $n$.

Equation \eqref{E:sed} allows one to form the inverse 
limit 
\begin{equation}\label{e:K(f) definition}
K(\bar{f}) = \varprojlim_n\, (I_n, f_n\res I_{n+1}).
\end{equation}
This inverse limit is a chainable continuum, as witnessed by the maps 
\[
K(\bar{f}) \ni (x_1, x_2, \dots) \mapsto l_k(x_k)\in [0,1], 
\]
where $l_k$ is a linear bijection $l_k : I_k \to [0, 1]$. We call $K(\bar{f})$ the {\bf continuum determined by $\bar{f}$}. 
Note that not all sequences $\bar{f}$ 
determine a continuum; for example, the sequence 
$\bar{f}$ with $f_n= {\rm id}_{\mathbb R}$ does not; on the other hand, the sequence $\overline{g}$ given by 
$g_n(t) = \sin (\pi nt)$ does determine a non-degenerate continuum. 

Recall the definition of a two-sided Brownian motion from Section~\ref{Su:basde}.

\begin{theorem}\label{T:model}
\begin{enumerate}
\item[(i)] The sequence ${\bar B}= (B_n)_{n \ge 1}$ 
of independent two-sided Brownian motions determines a non-degenerate continuum with probability 1. 

\item[(ii)] The continuum determined by ${\bar B}$ is indecomposable with probability 1. 
\end{enumerate}
\end{theorem}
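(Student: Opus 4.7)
I would fix a non-degenerate interval $J$ with $0\in J$, and set $J_n^{(k)} = (B_k \circ \cdots \circ B_{k+n})(J)$. Since $J_n^{(k)} = B_k(J_{n-1}^{(k+1)})$, almost sure convergence in row $k+1$ propagates to row $k$ by continuity of $B_k$; hence it suffices to prove convergence and non-degeneracy along a single row. My plan is to view $(J_n^{(1)})_n$ as a Markov chain on the space of compact intervals containing $0$, whose transition consists of applying a fresh independent two-sided Brownian motion, and to invoke the work on iterated Brownian motion in \cite{CK14} to identify a unique stationary distribution for this chain that is supported on non-degenerate intervals. Almost sure convergence from an arbitrary $J$ to a sample from that stationary distribution can then be established by coupling: run the chains started at $J$ and at a reference interval $J'$ with the \emph{same} $\bar B$ and show that the Hausdorff distance between them tends to $0$ almost surely. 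The main obstacle here is the contractive coupling estimate, which is precisely where the quantitative results of \cite{CK14} are used.

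\textbf{Part (ii).} I would apply the following criterion: $K(\bar B)=\varprojlim(I_n, B_n\res I_{n+1})$ is indecomposable provided that for every $k\ge 1$ there exists $N>k$ such that whenever subintervals $A,B\subseteq I_N$ satisfy $A\cup B = I_N$, at least one of $(B_k\circ\cdots\circ B_{N-1})(A)$ or $(B_k\circ\cdots\circ B_{N-1})(B)$ equals $I_k$. An elementary case analysis of cuts of $I_N$ shows this property is equivalent to the preimages $f^{-1}(\min I_k)$ and $f^{-1}(\max I_k)$, where $f=B_k\circ\cdots\circ B_{N-1}$, being \emph{interleaved} in $I_N$, meaning that neither set lies entirely to the left, nor entirely to the right, of the other.

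The plan is to show this interleaving is eventually achieved almost surely. At $N=k+1$ the preimages are single interior points of $I_{k+1}$ and are trivially separable. Each subsequent composition with a fresh $B_{N-1}$ replaces each preimage set by its pre-image under a new Brownian path; since level sets of Brownian motion at interior levels are uncountable perfect sets accumulating throughout the domain, with each iteration these sets become progressively intertwined. I would make this quantitative by showing that, conditionally on the $\sigma$-algebra generated by $\bar B$ up through stage $N-1$, the event that one more composition by $B_{N-1}$ produces interleaved preimages has probability bounded below by a positive universal constant $p>0$; this uses the positive probability that a Brownian path oscillates sufficiently between any two prescribed interior levels of its range. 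A conditional Borel--Cantelli argument then yields interleaving at some $N>k$ almost surely, for each fixed $k$.

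The main obstacle in part (ii) is the uniform lower bound $p>0$ on the conditional interleaving probability, since the random intervals $I_N$ and preimage sets do not possess stationarity in an elementary sense. Overcoming this would require combining the stationary distribution provided by part (i) with the fine path estimates of \cite{CK14}, to ensure that the intervals $I_N$ remain non-degenerate with tail behaviour that does not collapse, giving each fresh Brownian motion enough room to oscillate and interleave the growing pre-image sets.
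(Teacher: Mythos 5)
Your proposal contains genuine gaps in both parts. For part (i), the framing of $(W_n(J))_n$ with $W_n = B_1\circ\cdots\circ B_n$ as a Markov chain on intervals ``whose transition consists of applying a fresh independent two-sided Brownian motion'' does not match the object: each new motion is inserted on the \emph{inside} of the composition, $W_{n+1}(J)=W_n(B_{n+1}(J))$, so the next interval depends on the whole path of $W_n$ near the random set $B_{n+1}(J)$, not just on the set $W_n(J)$; this is a backward iteration, not a forward Markov chain, and almost sure convergence is a property of backward iterates that must be proved by contraction estimates rather than invoked from a stationary distribution. Those estimates are precisely the content you defer as ``the main obstacle,'' and they do not come for free from \cite{CK14}: the paper extracts from \cite[Lemma 9]{CK14} the distributional identity for $\length(W_k([0,t]))$, deduces that $\expval\log\length(W_k([-1/\delta,1/\delta]))-\expval\log\length(W_k([-\delta,\delta]))=2^{-k}\log(1/\delta^2)\to 0$, and combines this (via Markov's inequality) with separate Gaussian estimates showing that a single Brownian motion contracts large intervals and expands small ones with summable failure probabilities. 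The expansion estimates are also what guarantee non-degeneracy of the limit, which your plan does not address beyond asserting the stationary law is supported on non-degenerate intervals.

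For part (ii), your sufficient criterion via interleaved preimages of $\min I_k$ and $\max I_k$ is a legitimate classical route, but the proposed conditional Borel--Cantelli argument is not well-founded as stated: the interval $I_N$ and the levels $\min I_k,\max I_k$ are limits of the form $\lim_m (B_n\circ\cdots\circ B_{n+m})(J)$ and hence depend on the \emph{entire future} of the sequence $\bar B$, so they are not measurable with respect to ``the $\sigma$-algebra generated by $\bar B$ up through stage $N-1$''; the events whose conditional probabilities you want to bound below are not adapted to that filtration, and the uniform lower bound $p>0$ --- which you acknowledge is the crux --- is left entirely open. The paper circumvents this difficulty by a different mechanism: it proves only that $\limsup_n w(I_n)>0$ almost surely, where $w(I)=\min(\max I,-\min I)$, using identical distribution of the $w(I_n)$ and independence along a sparse subsequence, and then exploits part (i) itself. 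If $K=L^1\cup L^2$ and $[0,d]\subseteq J^{i_0}_n$ for infinitely many $n$, then since $[0,d]$ is a suitable interval, $(B_m\circ\cdots\circ B_n)([0,d])\to I_m$, forcing $I_m\subseteq J^{i_0}_m$ for every $m$ and hence $K=L^{i_0}$. That is, the independence of the limit interval from the starting interval does all the work that your interleaving and conditioning scheme was meant to do, with no need for a uniform conditional lower bound.
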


We spell out Theorem~\ref{T:model} by making explicit the dependence on the variable 
$\omega$ coming from 
the probability space $\Omega$ on which the $B_n$-s are defined. 
The conclusion of Theorem~\ref{T:model} asserts that the set of all $\omega\in \Omega$ such that 
\[
\begin{split}
&\hbox{the function }B_n(\omega, \cdot)\hbox{ is continuous for each }n\in\N, \hbox{ and}\\ 
&\hbox{the sequence }(B_n(\omega, \cdot))_{n\geq 1}\hbox{ determines a non-degenerate continuum, and}\\
&\hbox{the continuum determined by }(B_n(\omega, \cdot))_{n\geq 1}\hbox{ is indecomposable}
\end{split}
\]
has measure $1$.

The remainder of this section will be taken by the proof of the theorem above.

\begin{proof}[Proof of Theorem~\ref{T:model}]
We call an interval $I$ {\bf suitable} if it is non-degenerate and $0 \in I$. 

  \smallskip

\noindent {\em Proof of (i).} We denote by $W_n(t) = (B_1 \circ B_2 \circ \dots \circ B_n)(t)$ the composition of the first $n$ processes.

  
  
 We first state two claims, and show how the theorem follows from them. For an interval $I \subseteq \R$ and $c > 0$, let $c*I$ be the interval with the same center as $I$ and $\length(c*I) = c \length(I)$, where $\length(I)$ denotes the length of the interval $I$.
  
  \begin{claim}
    \label{c:there is a good k for delta}
    For every $\varepsilon > 0$ and $1> \delta > 0$, there exists a $k \in \N$ such that 
    \begin{equation}
      \label{e:short covers long}
      \prob\bigg((1 + \varepsilon) * W_k([-\delta, \delta]) \supseteq W_k\Big(\Big[\frac{-1}{\delta}, \frac{1}{\delta}\Big]\Big)\bigg) > 1 - \varepsilon.
    \end{equation}
  \end{claim}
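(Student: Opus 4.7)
The plan is to decompose $W_k = B_1 \circ \phi_k$, where $\phi_k := B_2 \circ B_3 \circ \cdots \circ B_k$ is independent of $B_1$, and handle the two pieces separately. Since $\phi_k$ is continuous with $\phi_k(0) = 0$, the images $J := \phi_k([-\delta, \delta])$ and $J' := \phi_k([-1/\delta, 1/\delta])$ are random closed intervals with $0 \in J \subseteq J'$, and $W_k([-\delta, \delta]) = B_1(J)$, $W_k([-1/\delta, 1/\delta]) = B_1(J')$.

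The first step is to condition on $\phi_k$ and pass the problem to $B_1$. Writing $J = [m, M]$ and $J' = [m', M']$, the target containment $B_1(J') \subseteq (1+\varepsilon) * B_1(J)$ is equivalent to $\max_{J'} B_1 - \max_J B_1 \leq (\varepsilon/2)\,\osc(B_1, J)$ together with the analogous inequality for the minima. By the Markov property of $B_1$ applied at the times $M$ and $m$, the excursions $\max_{[M, M']} B_1 - B_1(M)$ and $B_1(m) - \min_{[m', m]} B_1$ are distributed as running maxima of independent Brownian motions of durations $M' - M$ and $m - m'$, each therefore of order $\sqrt{|J'| - |J|}$; meanwhile $\osc(B_1, J)$ is of order $\sqrt{|J|}$. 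Standard Brownian tail estimates then yield: whenever $(|J'| - |J|)/|J| \leq \eta$, the conditional $B_1$-probability of containment is at least $1 - \psi(\eta)$, for some function $\psi$ with $\psi(\eta) \to 0$ as $\eta \to 0$.

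It therefore suffices to produce $k$ with $\prob\bigl((|J'|-|J|)/|J| > \eta\bigr) < \varepsilon/2$, where $\eta$ is chosen so that $\psi(\eta) < \varepsilon/2$. For this I would invoke the self-similarity of iterated Brownian motion: $\phi_k(\alpha t) \equaldist \alpha^{1/2^{k-1}} \phi_k(t)$ as processes in $t$, for every $\alpha > 0$. Marginally this gives $|J| \equaldist \delta^{1/2^{k-1}} X$ and $|J'| \equaldist \delta^{-1/2^{k-1}} X$ where $X := |\phi_k([-1,1])|$. Combined with the pathwise inequality $|J| \leq |J'|$, we obtain
\[
\expval\bigl[(|J'|-|J|)^2\bigr] \;\leq\; \expval[|J'|^2] - \expval[|J|^2] \;=\; \bigl(\delta^{-2/2^{k-1}} - \delta^{2/2^{k-1}}\bigr)\expval[X^2] \;=\; O\bigl(2^{-k}\bigr),
\]
once one knows that $\expval[X^2]$ is bounded uniformly in $k$ (which follows, by an analogous telescoping computation, from the boundedness of positive moments of the range of a standard Brownian motion). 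Cauchy--Schwarz followed by Markov's inequality then bounds the desired probability in terms of $\expval[|J|^{-2}]$.

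The main obstacle is therefore uniform-in-$k$ control of the negative moment $\expval[|J|^{-2}] = \delta^{-2/2^{k-1}}\,\expval[X^{-2}]$. For this I would use the recursion: conditionally on $\phi_{k,2} := B_3 \circ \cdots \circ B_k$, the variable $X = |\phi_k([-1,1])|$ has the distribution of $\sqrt{|\phi_{k,2}([-1,1])|}\, R$, where $R$ is the range of a standard Brownian motion on $[0,1]$ independent of $\phi_{k,2}$. Iterating this identity expresses $\expval[X^{-2}]$ as a product of terms of the form $\expval[R^{-2/2^{j}}]$, each finite because the range of Brownian motion has lower tail decaying like $e^{-c/s^2}$, and with exponents $2/2^j$ small enough for the infinite product to converge. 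This yields a uniform bound on $\expval[X^{-2}]$; alternatively, non-degeneracy estimates for iterated Brownian motion from \cite{CK14} can be invoked.
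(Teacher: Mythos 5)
Your proposal is correct in substance, but it follows a genuinely different and more roundabout route than the paper. The paper's proof exploits the fact that, since $[-\delta,\delta]\subseteq[-1/\delta,1/\delta]$ and both contain $0$, the containment $W_k([-\delta,\delta])\subseteq W_k([-1/\delta,1/\delta])$ holds pathwise; hence the whole claim reduces to showing that the ratio of the two lengths is at most $1+\varepsilon/2$ with probability $>1-\varepsilon$. This is done by noting, via the distributional identity of \cite[Lemma 9]{CK14}, that the nonnegative variable $\log\length\bigl(W_k([-1/\delta,1/\delta])\bigr)-\log\length\bigl(W_k([-\delta,\delta])\bigr)$ has expectation exactly $2^{-k}\log(1/\delta^2)$, and applying Markov's inequality once. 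Your step 2 is the same length-ratio control, executed at the level of $\phi_k$ with second and negative-second moments (the inequality $(b-a)^2\le b^2-a^2$ for $0\le a\le b$, Cauchy--Schwarz, and the telescoping products for $\expval[X^{2}]$ and $\expval[X^{-2}]$) in place of the first moment of the logarithm; this works, at the cost of needing the Gaussian small-ball bound $\prob(R<s)\le e^{-c/s^2}$ to make $\expval[R^{-2}]$ finite, where the paper gets by with $\expval|\log R|<\infty$. Your step 1 --- splitting off $B_1$, conditioning on $\phi_k$, and comparing the excursions of $B_1$ beyond $J$ with $\osc(B_1,J)$ via the Markov property --- is also sound: the points to check are that $0\in J$, so the two excursion windows lie on opposite sides of the origin and the Markov property applies cleanly to the two-sided process, and that the excursions are independent of $\osc(B_1,J)$, so after scaling the comparison reduces to a quantity like $\prob\bigl(\sqrt{\eta}\,M(1)>(\varepsilon/2)R\bigr)$ with independent factors, which tends to $0$ as $\eta\to0$ uniformly in the intervals. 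However, this entire first step is unnecessary: applying your step-2 estimate to $W_k$ itself rather than to $\phi_k$, combined with the pathwise containment noted above, already yields the claim.
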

  \begin{claim}
    \label{c:there is a good delta}
    For every $\varepsilon > 0$, there exists $1> \delta > 0$ such that for each $k \ge 1$, 
    \begin{equation}
      \label{e:decrease}
      \prob\bigg(\forall n \in \N \bigg( B_{k + n}\Big(\Big[\frac{-2^n}{\delta}, \frac{2^n}{\delta}\Big]\Big) \subseteq \Big[\frac{-2^{n - 1}}{\delta}, \frac{2^{n - 1}}{\delta}\Big]\bigg)\bigg) > 1 - \varepsilon,
    \end{equation}
    \begin{equation}
      \label{e:right increase}
      \prob\bigg(\forall n \in \N \bigg( B_{k + n}\Big(\Big[0, \frac{\delta}{2^n}\Big]\Big) \supseteq \Big[\frac{-\delta}{2^{n - 1}}, \frac{\delta}{2^{n - 1}}\Big]\bigg)\bigg) > 1 - \varepsilon,
    \end{equation}
    \begin{equation}
      \label{e:left increase}
      \prob\bigg(\forall n \in \N \bigg( B_{k + n}\Big(\Big[\frac{-\delta}{2^n}, 0\Big]\Big) \supseteq \Big[\frac{-\delta}{2^{n - 1}}, \frac{\delta}{2^{n - 1}}\Big]\bigg)\bigg) > 1 - \varepsilon.
    \end{equation}
  \end{claim}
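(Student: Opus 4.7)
My plan is to bound each of the three events \eqref{e:decrease}, \eqref{e:right increase}, \eqref{e:left increase} by a union bound over $n \in \N$, estimating each term by Brownian scaling combined with standard tail bounds. Uniformity in $k$ comes for free: since the $B_j$ are i.i.d.\ two-sided Brownian motions, the sequence $(B_{k+n})_{n \ge 1}$ has the same joint distribution as $(B_n)_{n \ge 1}$ for every $k \ge 1$, so none of the probabilities actually depends on $k$, and I may as well take $k = 0$ in each estimate.

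For \eqref{e:decrease}, set $T_n = 2^n/\delta$ and write $M = \max_{|s| \le 1} |B(s)|$ for a generic two-sided Brownian motion $B$. The scaling identity $\max_{|t| \le T_n} |B_{k+n}(t)| \equaldist \sqrt{T_n}\, M$ reduces the event in question to
\[
\prob\bigl(B_{k+n}([-T_n, T_n]) \not\subseteq [-T_n/2, T_n/2]\bigr) = \prob\bigl(M > \tfrac{1}{2}\sqrt{T_n}\bigr).
\]
Applying the reflection principle separately to $B_+$ and $B_-$ yields a subgaussian tail $\prob(M > x) \le C_1 e^{-x^2/2}$, so the above probability is at most $C_1 e^{-2^n/(8\delta)}$. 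The series $\sum_n C_1 e^{-2^n/(8\delta)}$ tends to $0$ as $\delta \to 0^+$ (even the $n = 1$ term dominates), so a sufficiently small $\delta$ makes the union bound less than $\varepsilon$.

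For \eqref{e:right increase}, and symmetrically for \eqref{e:left increase} with $B_-$ in place of $B_+$, set $s_n = \delta/2^n$. Scaling yields $\max_{[0,s_n]} B_{k+n} \equaldist \sqrt{s_n}\, \max_{[0,1]} B_+$ and the analogous identity for the minimum, so the failure probability at index $n$ is at most
\[
\prob\bigl(\max_{[0,1]} B_+ < 2\sqrt{s_n}\bigr) + \prob\bigl(\min_{[0,1]} B_+ > -2\sqrt{s_n}\bigr).
\]
The reflection principle gives $\prob(\max_{[0,1]} B_+ < x) = 2\prob(0 \le B_+(1) < x) \le x\sqrt{2/\pi}$ for $x \ge 0$, and the analogous bound for the minimum holds by the symmetry $\min_{[0,1]} B_+ \equaldist -\max_{[0,1]} B_+$. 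Hence the failure probability at index $n$ is $O(\sqrt{s_n}) = O(\sqrt{\delta}\cdot 2^{-n/2})$, which sums over $n$ to $O(\sqrt{\delta})$, again made less than $\varepsilon$ by choosing $\delta$ small.

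Choosing $\delta$ small enough that each of the three tail sums is below $\varepsilon$ simultaneously completes the proof. There is no substantive obstacle: on the large scale \eqref{e:decrease}, Brownian motion is vanishingly unlikely to grow to half the radius of the input interval, while on the small scale \eqref{e:right increase}--\eqref{e:left increase}, it overshoots the length of a tiny input interval with essentially full probability. The only point worth emphasizing is that every estimate is insensitive to $k$, which is precisely what lets a single $\delta$ serve all $k \ge 1$.
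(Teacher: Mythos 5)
Your proof is correct and follows essentially the same route as the paper: a union bound over $n$, reduced by Brownian scaling and the reflection principle to Gaussian tail and density estimates whose sum over $n$ is made less than $\varepsilon$ by shrinking $\delta$, with uniformity in $k$ free because the $(B_{k+n})_n$ are identically distributed. The only cosmetic difference is that the paper fixes $\delta = 2^{-n_0}$ and bounds a fixed summable series' tail (using $\prob(N>x)\le 1/x$) rather than tracking the $\delta$-dependence of the whole sum as you do.
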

  
Since the sequences of random variables $(B_{k+n})_n$ have the same distribution for all $k$, it is enough to show that with probability 1, the limit $\lim_n W_n(J)$ exists and is the same non-degenerate interval $I_1$ for each suitable interval $J$. 
  
  Let us fix an $\varepsilon > 0$ towards proving an $\varepsilon$ approximation of the above statement. Let $\delta(\varepsilon) = \delta > 0$, $\delta < 1$, 
  be given by Claim \ref{c:there is a good delta} for $\varepsilon$. Then for each $k \in \N$ Claim \ref{c:there is a good delta} \eqref{e:decrease} implies that with probability greater than $1 - \varepsilon$ for each $n \in \N$ 
  \begin{align*}
    B_{k+1} \circ \dots \circ B_{k+n}\left(\left[\frac{-2^n}{\delta}, \frac{2^n}{\delta}\right]\right) 
    & \subseteq 
    B_{k+1} \circ \dots \circ B_{k+n-1}\left(\left[\frac{-2^{n-1}}{\delta}, \frac{2^{n-1}}{\delta}\right]\right) 
    \\ & \subseteq
    B_{k+1} \circ \dots \circ B_{k+n-2}\left(\left[\frac{-2^{n-2}}{\delta}, \frac{2^{n-2}}{\delta}\right]\right) 
  \end{align*}
  and so for each $k \in \N$, with probability greater than $1 - \varepsilon$,
  \begin{equation}
    \label{e:iterates end up in large interval}
    B_{k+1} \circ \dots \circ B_{k+n}\left(\left[\frac{-2^n}{\delta}, \frac{2^n}{\delta}\right]\right) \subseteq \left[\frac{-1}{\delta}, \frac{1}{\delta}\right]
  \end{equation}
  holds for all $n \in \N$.
  With a similar argument, Claim \ref{c:there is a good delta} \eqref{e:right increase} and \eqref{e:left increase} imply that for each $k \in \N$ with probability larger than $1 - 2\varepsilon$, 
  \begin{equation}
  \label{e:iterates cover short interval}
  \begin{split}
    B_{k+1} \circ \dots \circ B_{k+n}\left(\left[0, \frac{\delta}{2^n}\right]\right) \supseteq \left[{\delta}, {\delta}\right] \\
    B_{k+1} \circ \dots \circ B_{k+n}\left(\left[\frac{-\delta}{2^n}, 0\right]\right) \supseteq \left[{\delta}, {\delta}\right]
  \end{split}
  \end{equation}
  both hold for all $n \in \N$. Therefore with probability greater than $1-3\varepsilon$, all three equations hold for each $n \in \N$.

  Let $\kappa$ be the $k$ chosen Claim \ref{c:there is a good k for delta} for the given
  $\varepsilon$ and $\delta$ and put 
  \[
  I(\varepsilon) = W_\kappa([-\delta, \delta]).  \]
  Using \eqref{e:short covers long}, \eqref{e:iterates end up in large interval} and \eqref{e:iterates cover short interval} with $k = \kappa$, with probability greater than $1 - 4\varepsilon$, 
  \begin{equation}
  \label{e:main equations}
  \begin{split}
    W_\kappa \circ B_{\kappa +1} \circ \dots \circ B_{\kappa + n}\left(\left[\frac{-2^n}{\delta}, \frac{2^n}{\delta}\right]\right) &\subseteq W_\kappa\left(\left[\frac{-1}{\delta}, \frac{1}{\delta}\right]\right) \subseteq (1+\varepsilon)*I(\varepsilon), \\
    W_\kappa \circ B_{\kappa +1} \circ \dots \circ B_{\kappa + n}\left(\left[0, \frac{\delta}{2^n}\right]\right) &\supseteq I(\varepsilon), \\
    W_\kappa \circ B_{\kappa +1} \circ \dots \circ B_{\kappa + n}\left(\left[\frac{-\delta}{2^n}, 0\right]\right) &\supseteq I(\varepsilon), \\
  \end{split}
  \end{equation}
  all hold for each $n \in \N$. 
  
  For each suitable interval $J$ there is an index $n_0(J) > 0$ such that for each $n \ge n_0(J)$ 
  \begin{equation}\label{E:varinc} 
    J \subseteq [-2^n/\delta, 2^n/\delta]\;\hbox{ and either }\;[0, \delta/2^n] \subseteq J\;\hbox{ or }\;[-\delta/2^n, 0] \subseteq J. 
  \end{equation} 
  Thus, using \eqref{e:main equations}, we see that with probability greater than $1 - 4\varepsilon$, 
  \begin{equation}\label{e:W_kappa+n(J) is sandwiched}
    I(\varepsilon) \subseteq W_{\kappa + n}(J) \subseteq (1 + \varepsilon) * I(\varepsilon), 
  \end{equation}
  for all suitable $J$ and large enough $n$ depending on $J$.
  
  We now apply the conclusion above for $\varepsilon, \varepsilon/2, \varepsilon/4, \dots$ simultaneously, to obtain that with probability at least $1 - 8 \varepsilon$, \eqref{e:W_kappa+n(J) is sandwiched} holds for $\frac{\varepsilon}{2^j}$ for each $j \ge 0$. Now suppose that \eqref{e:W_kappa+n(J) is sandwiched} holds for each $\frac{\varepsilon}{2^j}$. It is enough to show that in this case, $\lim_{n \to\infty} W_n(J)$ exists and is the same non-degenerate interval for all suitable $J$. To see this, note that $I(\varepsilon) \subseteq (1 + \frac{\varepsilon}{2^j}) * I(\frac{\varepsilon}{2^j})$ and $I(\frac{\varepsilon}{2^j}) \subseteq (1 + \varepsilon) * I(\varepsilon)$, hence there is a universal upper and lower bound on the length of the intervals $I(\frac{\varepsilon}{2^j})$. Similarly, we can write $I(\frac{\varepsilon}{2^l}) \subseteq (1 + \frac{\varepsilon}{2^j}) * I(\frac{\varepsilon}{2^j})$ and $I(\frac{\varepsilon}{2^j}) \subseteq (1 + \frac{\varepsilon}{2^l}) * I(\frac{\varepsilon}{2^l})$ for each $j$ and $l$, hence the sequence of intervals $(I(\frac{\varepsilon}{2^j}))_{j \ge 0}$ is Cauchy in the complete Hausdorff metric. Thus, it converges to some non-degenerate interval $I_1$ as $j \to \infty$. As the distance of $I(\frac{\varepsilon}{2^j})$ and $(1 + \frac{\varepsilon}{2^j}) * I(\frac{\varepsilon}{2^j})$ converges to $0$ as $j \to \infty$, the sequence $((1 + \frac{\varepsilon}{2^j}) * I(\frac{\varepsilon}{2^j}))_{j \ge 0}$ converges to $I_1$ as well. Using \eqref{e:W_kappa+n(J) is sandwiched}, it follows that $W_{\kappa + n}(J)$ converges to $I_1$ for each suitable interval $J$. Therefore, it remains to prove the two claims. 
  
  Before we start, let us denote by $M(t) = \sup_{x \in [0, t]} B(x)$ the maximum of a Brownian motion on the interval $[0, t]$. 
  We recall from \cite[Theorem 2.21]{MP10} the formula
    \begin{equation}
      \label{e:max}
      \prob(M(t) > a) = 2\prob(B(t) > a),\, \text{ for $a > 0$,}
    \end{equation}
    from which we also obtain 
     \begin{equation}
      \label{E:max}
      \prob(M(t) \leq a) = 1 - 2\prob(B(t) > a )= 2\prob(0\leq B(t) \leq a),\, \text{ for $a > 0$.}
    \end{equation}

  \begin{proof}[Proof of Claim \ref{c:there is a good k for delta}]
    We start with a formula for the distribution of the length of the image of an interval under $W_n$.
    We denote the oscillation of $W_k$ on the interval $[0, t]$ by  $$\Delta_k(t) = \length(W_k([0, t])) = \sup_{0 \le x \le y \le t}|W_k(x) - W_k(y)|  = \sup_{0 \le x \le t}W_k(x) - \inf_{0 \le x \le t} W_k(x)$$ and let $D_1, D_2, \dots$ be independent, identically distributed copies of $\Delta_1(1)$. Then by \cite[Lemma 9]{CK14}, 
    \begin{equation}
      \label{e:dist of length}
      \Delta_k(t) \equaldist  t^{2^{-k}} \prod_{i = 1}^k D_i^{2^{-(i - 1)}},
    \end{equation}
    where $\equaldist $ means equality in distribution. 
    
    We show that the expected value of $|\log(\Delta_k(t))|$ is finite. Using \eqref{e:dist of length}, we get 
    \begin{equation}\label{E:delo}
    \expval \log(\Delta_k(t)) = 2^{-k} \log t + \sum_{i = 1}^k 2^{-(i-1)}\expval \log(D_i).
    \end{equation}
    Therefore, it is enough to show that $\expval |\log(\Delta_1(1))|$ is finite. We compute 
    \begin{align*}
      \expval &|\log(\Delta_1(1))| = \int_{0}^{\infty} \prob(\log(\Delta_1(1)) > x) dx + \int_{-\infty}^{0} \prob(\log(\Delta_1(1)) \le x) dx \\ 
      &= \int_{0}^{\infty} \prob(\Delta_1(1) > e^x) dx + \int_{-\infty}^{0} \prob(\Delta_1(1) \le e^x) dx \\ 
      &\le 2\int_{0}^{\infty} \prob(M(1) > e^x/2) dx + \int_{-\infty}^{0} \prob(M(1) \le e^x) dx \\ 
      &= 4\int_{0}^{\infty} \prob(B(1) > e^x/2) dx + 2 \int_{-\infty}^{0} \prob(0\leq B(1)\leq e^x) dx\\ 
      &= 4\int_{0}^{\infty} \frac{1}{\sqrt{2\pi}}\int_{e^x/2}^\infty e^{-u^2/2}du dx + 2\int_{-\infty}^{0} \frac{1}{\sqrt{2\pi}}\int_{0}^{e^x} e^{-u^2/2} du dx \\
      &\le 4\int_{0}^{\infty} \frac{1}{\sqrt{2\pi}}\left(\int_{e^x/2}^\infty e^{-u/4}du \right) dx + 2\int_{-\infty}^{0} \frac{1}{\sqrt{2\pi}}\left( \int_{0}^{e^x} 1 du \right)dx <\infty
    \end{align*}
    where we used \eqref{e:max} and \eqref{E:max} for the third equality and $-u^2/2\leq -u/4$ holding for $u\geq 1/2$ for the second inequality.
    
    Using \eqref{E:delo}, finiteness of $\expval \log(\Delta_k(t))$, and the fact that the distribution of the length of the image of an interval under $W_k$ only depends on the length of the interval, we get 
    \[
    \expval \log(\length(W_k([-1/\delta, 1/\delta]))) - \expval \log(\length(W_k([-\delta, \delta]))) = \frac{\log(1/\delta^2)}{2^k}.
    \]
    Now we choose $k$ large enough so that the difference is smaller than $\varepsilon\log(1 + \varepsilon/2)$. Then Markov's inequality implies that with probability at least $1 - \varepsilon$, $$\log(\length(W_k([-1/\delta, 1/\delta]))) - \log(\length(W_k([-\delta, \delta]))) \le \log(1 + \varepsilon/2).$$
    Therefore, with probability at least $1 - \varepsilon$, \begin{equation}
      \label{e:ratio of lengths}
      \frac{\length(W_k([-1/\delta, 1/\delta]))}{\length(W_k([-\delta, \delta]))} \le 1 + \frac{\varepsilon}{2}.
    \end{equation} 
    The fact that $W_k([-1/\delta, 1/\delta]) \supseteq W_k([-\delta, \delta])$ and \eqref{e:ratio of lengths} imply $$(1+\varepsilon) * W_k([-\delta, \delta]) \subseteq W_k([-1/\delta, 1/\delta]),$$
    and the claim follows.
    \end{proof}
  \begin{proof}[Proof of Claim \ref{c:there is a good delta}]
    We find a suitable $\delta$ in the form of $1/2^n$. The proof consists of calculations using \eqref{e:max} and \eqref{E:max}. Let $N$ be a standard normal random variable. For $n \in \N$, we have 
    \begin{align*}
      \prob&\big(B([-2^n, 2^n]) \not\subseteq [-2^{n - 1}, 2^{n - 1}]\big) \le 4\prob\big(M(2^n) > 2^{n - 1}\big)
      = 8\prob\big(B(2^n) > 2^{n - 1}\big) \\ &= 8\prob\big(2^{\frac{n}{2}}B(1) > 2^{n-1}\big) = 8\prob \big(N > 2^{\frac{n}{2} - 1}\big) \le 2^{-\frac{n}{2} + 4},
    \end{align*}
    using \eqref{e:max}, the scaling invariance of the Brownian motion (see e.g.~\cite[Lemma 1.7]{MP10}) and the inequality $\prob(N > x) \le \frac{1}{x}$. We also get 
    \begin{align*}
      \prob&\bigg(B\Big(\Big[0, \frac{1}{2^n}\Big]\Big) \not \supseteq \Big[-\frac{1}{2^{n - 1}}, \frac{1}{2^{n - 1}}\Big]\bigg) \le 2\prob\bigg(M\Big(\frac{1}{2^n}\Big) < 
      \frac{1}{2^{n - 1}}\bigg) \\
      &= 4 \prob\bigg(0 \le B\Big(\frac{1}{2^n}\Big) < \frac{1}{2^{n - 1}}\bigg) 
      = \frac{4\cdot 2^{n/2}}{\sqrt{2\pi}}\int_0^{1/2^{n - 1}} e^{-u^22^n/2} du \\
      &\le \frac{4\cdot 2^{n/2}}{2^{n - 1}\sqrt{2\pi}} \le 2^{-n/2 + 2},
    \end{align*}
    where \eqref{E:max} is applied for the first equality. And similarly, we obtain 
    \begin{align*}
      \prob\bigg(B\Big(\Big[-\frac{1}{2^n}, 0\Big]\Big) \not \supseteq \Big[-\frac{1}{2^{n - 1}}, \frac{1}{2^{n - 1}}\Big]\bigg) \le 2^{-n/2 + 2}.
    \end{align*}
    These probabilities are summable, hence for a given $\varepsilon > 0$, for large enough $n_0$, the sum of them for $n \ge n_0$ is less than 
    $\varepsilon$. By setting $\delta = 1/2^{n_0}$, the claim follows from the fact that the probabilities are independent of $k$. 
\end{proof}

\smallskip

\noindent {\em Proof of (ii).} We keep our notation from the proof of point (i). For a suitable interval $I$, let 
\[
w(I) = \min ( \max (I), -\min(I)). 
\]

\begin{claim}\label{C:lims}
$\limsup_n w(I_n)>0$ with probability 1. 
\end{claim}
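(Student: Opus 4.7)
The plan is to combine Kolmogorov's $0$--$1$ law with stationarity and a one-step analysis of the recursion $I_n = B_n(I_{n+1})$, reducing the claim to the almost sure inequality $w(I_1) > 0$.

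First I would observe that $\{\limsup_n w(I_n) > 0\}$ is a tail event for the independent sequence $(B_n)_{n \ge 1}$. By the construction in (i), $I_n$ is a measurable function of $(B_n, B_{n+1}, \dots)$, so for every $k \geq 1$ the random variable $\limsup_n w(I_n) = \limsup_{n \ge k} w(I_n)$ is $\sigma(B_k, B_{k+1}, \dots)$-measurable. Kolmogorov's $0$--$1$ law then reduces the task to showing that the event has strictly positive probability.

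Next I would prove the core claim $\prob(w(I_1) > 0) = 1$. By \eqref{E:sed}, almost surely $I_1 = B_1(I_2)$; since $I_2$ depends only on $(B_2, B_3, \dots)$, it is independent of $B_1$, and by (i) it is almost surely a non-degenerate interval $[\alpha, \beta]$ with $\alpha \leq 0 \leq \beta$. Conditionally on $I_2 = [\alpha, \beta]$, the interval $I_1$ is the range of $B_1$ on $[\alpha, \beta]$, so $w(I_1) > 0$ precisely when $B_1$ attains both strictly positive and strictly negative values on $[\alpha, \beta]$. At least one of $\beta > 0$ or $\alpha < 0$ must hold, and standard Brownian motion facts (cf.\ \eqref{e:max} combined with continuity of measure as the threshold tends to $0^+$) give $\prob(\max_{[0, \beta]} B_1 > 0) = \prob(\min_{[0, \beta]} B_1 < 0) = 1$ for every fixed $\beta > 0$, with symmetric statements on $[\alpha, 0]$ when $\alpha < 0$. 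The independence of $B_1$ from $I_2$ thus yields $\prob(w(I_1) > 0 \mid I_2) = 1$ almost surely, so $\prob(w(I_1) > 0) = 1$.

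The remaining steps are soft. Since $(B_k, B_{k+1}, \dots) \equaldist (B_1, B_2, \dots)$, we have $w(I_n) \equaldist w(I_1)$ for every $n$. Continuity of measure then gives $c > 0$ with $p := \prob(w(I_1) \ge c) > 0$, and $\prob(w(I_n) \ge c) = p$ for all $n$. The reverse Fatou lemma yields
\[
\prob\bigl(\limsup_n w(I_n) \ge c\bigr) \ge \prob\bigl(w(I_n) \ge c \text{ i.o.}\bigr) \ge \limsup_n \prob(w(I_n) \ge c) = p > 0,
\]
and combined with the tail-event observation this upgrades to probability $1$. The main obstacle I foresee is the core step: the other ingredients are standard, but one must carefully use the independence of $I_2$ from $B_1$ to transfer sign-change properties of a standard Brownian motion on a fixed time interval to $B_1$ restricted to the random interval $I_2$.
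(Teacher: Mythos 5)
Your proof is correct, but it takes a genuinely different route from the paper's. The paper never invokes a zero--one law: it fixes $d>0$ with $\prob(w(I_1)<d)<1-2\epsilon$, then manufactures independence by choosing a sparse subsequence $(n_k)$ along which $I_{n_k}$ is, with high probability, within $d/3$ (in Hausdorff distance) of the image $B_{n_k}\circ\cdots\circ B_{n_{k+1}-1}([0,1])$ of a \emph{finite} composition; the widths $b_k$ of these finite-block images are genuinely independent, the second Borel--Cantelli lemma gives $b_k\geq 2d/3$ infinitely often a.s., and this is transferred back to $a_{n_k}=w(I_{n_k})\geq d/3$ with probability $>1-\epsilon$. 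Your observation that $\limsup_n w(I_n)$ is measurable with respect to $\sigma(B_k,B_{k+1},\dots)$ for every $k$, hence a.s.\ constant by Kolmogorov's $0$--$1$ law, replaces this entire block-approximation construction: you only need positive probability, which follows from $\prob(w(I_1)>0)=1$, stationarity, and reverse Fatou. This is shorter and avoids the quantitative estimate \eqref{E:ns}. Your proof of the core fact $\prob(w(I_1)>0)=1$ also differs from (and is arguably more robust than) the paper's: the paper argues via the equidistribution of $\max I_1$ and $-\min I_1$ coming from the symmetry of $B_1\mapsto -B_1$ together with non-degeneracy of $I_1$, whereas you condition on $I_2$ (independent of $B_1$, a.s.\ a non-degenerate interval containing $0$) and use that a Brownian path changes sign on any fixed non-degenerate interval issuing from $0$, which pins down both $\max I_1>0$ and $\min I_1<0$ directly. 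The only point to state carefully in a final write-up is the measurability of $I_n$ as a function of $(B_n,B_{n+1},\dots)$ (an a.s.\ pointwise limit of the measurable interval-valued maps $\omega\mapsto (B_n\circ\cdots\circ B_{n+k})([-1,1])$, as in the paper's Section 4), so that the tail-event argument is legitimate; with that noted, your argument is complete.
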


Assuming the claim, we show point (ii). Let $K= K({\bar B})$ be the continuum determined by ${\bar B} = (B_n)_{n \in \N}$, as defined in \eqref{e:K(f) definition}. Assume 
\begin{equation}\label{E:uni}
K= L^1\cup L^2
\end{equation}
with $L^1$ and $L^2$ being continua. We aim to show that $K=L^1$ or $K=L^2$. Let $J^1_n = \{x_n : (x_1, x_2, \dots) \in L^1\}$, and similarly, $J^2_n = \{x_n : (x_1, x_2, \dots) \in L^2\}$ for each $n$. From compactness and connectedness of $L^1$ and $L^2$, 
we get that $J^1_n$ and $J^2_n$ are compact intervals for each $n$. Moreover, since $L^1, L^2 \subseteq K$, and $K$ is an inverse limit with bonding maps $B_n$, we see that 
$B_n$ maps $J^1_{n+1}$ onto $J_n^1$ and $J^2_{n+1}$ onto $J^2_n$. Together with equality \eqref{E:uni}, we conclude that 
\begin{equation}\label{E:var}
I_n = J^1_n\cup J^2_n,\; B_{n}(J^1_{n+1}) = J^1_n,\hbox{ and } B_{n}(J^2_{n+1}) = J^2_n,
\end{equation}
and 
\begin{equation}\label{E:pal}
L^1 = \varprojlim_n\, (J^1_n,\, B_n\res J^1_{n+1})\;\hbox{ and }\; L^2 = \varprojlim_n\, (J^2_n,\, B_n\res J^2_{n+1}).
\end{equation}
From Claim~\ref{C:lims}, we obtain $d>0$ and an infinite set $X\subseteq {\mathbb N}$ such that 
\begin{equation}\label{E:hep}
\max I_{n} >d\hbox{ and } \min I_{n}<-d\hbox{ for all }n\in X. 
\end{equation} 
Now, noting that $0 \in I_n$ for each $n$, $\{\min I_n, 0, \max I_n\} \in J^1_n \cup J^2_n$ by the first inequality in \eqref{E:var}. Hence, for some $i_0 \in \{1, 2\}$ we have that $0, \max I_n \in J_n^{i_0}$ or $0, \min I_n \in J^{i_0}_n$. We then get $i_0\in \{ 1, 2\}$ and an infinite set $Y\subseteq X$ such that 
\begin{equation}\label{E:alt}
0,\, \max I_{n}\in J^{i_0}_{n} \hbox{ for all }n\in Y\;\hbox{ or }\; 0,\, \min I_{n}\in J^{i_0}_{n} \hbox{ for all }n\in Y.
\end{equation}
To fix attention, assume that $i_0=1$ and that, in \eqref{E:alt}, the first alternative holds. Then, using \eqref{E:hep}, we see that 
\begin{equation}\label{E:mos}
[0, d]\subseteq J^1_n\hbox{ for all }n\in Y. 
\end{equation}
Now, for each $m$, in the sense of the Hausdorff metric,
\[
I_m = \lim_{n>m,\, n\in Y} \bigl( B_{m}\circ \cdots \circ B_n\bigr)([0,d]) \subseteq \bigcup_n \bigl( B_{m}\circ \cdots \circ B_n\bigr)(J^1_n)= J^1_m,
\]
where the first equality holds by (i) of the theorem, the inclusion holds by \eqref{E:mos}, and the last equality holds by \eqref{E:var}. 
So we proved that $I_m=J^1_m$ holds for all $m$, which gives $K=L^1$ by \eqref{E:pal}, as required. 

It remains to show the claim. 

\begin{proof}[Proof of Claim~\ref{C:lims}.] Let 
\[
a_n = w(I_n).
\]
Then $(a_n)$ is a sequence of identically distributed random variables. 
We have $a_1>0$ with probability 1. Indeed, the random variables $\max I_1$ and $-\min I_1$ have the same distribution by the symmetry of the formula 
\[
I_1 = \lim_n W_n([-1,1]). 
\]
Thus, since $I_1$ is non-degenerate with probability 1, we have that both $\max I_1>0$ and $-\min I_1>0$ hold with probability 1, which gives 
${\mathbb P}(a_1>0) =1$. 
It follows that there exists $d>0$ such that 
\begin{equation}\notag
{\mathbb P}(a_1<d)<1-2\epsilon
\end{equation}
for small enough $\epsilon>0$. Since the sequence $(a_n)$ is identically distributed, we get that, for small enough $\epsilon>0$, 
\begin{equation}\label{E:and} 
\forall n\; {\mathbb P}(a_n<d)<1-2\epsilon. 
\end{equation}

Find, a sequence $1=n_0<n_1<n_2<\cdots$ such that 
\begin{equation}\label{E:ns} 
{\mathbb P}\Bigl( {\rm dist}\bigl(I_{n_k}, B_{n_k} \circ \cdots \circ B_{n_{k+1}-1}([0,1])\bigr) <\frac{d}{3}\Bigr) >1-\frac{\epsilon}{2^{k+1}}.
\end{equation}
Define
\[
b_k= w\bigl(B_{n_k} \circ \cdots \circ B_{n_{k+1}-1}([0,1])\bigr). 
\]
So, $(b_k)$ is a sequence of independent random variables.
By \eqref{E:ns}, 
\begin{equation}\label{E:pdd}
{\mathbb P}\bigl( \forall k\;  a_{n_k}> b_k - \frac{d}{3}\bigr) > 1-\sum_k \frac{\epsilon}{2^{k+1}} = 1-\epsilon
\end{equation} 
and, by \eqref{E:ns} and \eqref{E:and}, for each $k$ 
\begin{equation}\label{E:pri}
{\mathbb P}\bigl(b_k<\frac{2d}{3}\bigr) \leq {\mathbb P}\bigl(a_{n_k} < d\bigr) + \epsilon < 1-\epsilon. 
\end{equation}
By independence of $(b_k)$ it follows from \eqref{E:pri} that 
\[
{\mathbb P}\bigl(b_k<\frac{2d}{3}\hbox{ for all but finitely many }k\bigr) = 0, 
\]
and, therefore, 
\[
{\mathbb P}\bigl(b_k\geq \frac{2d}{3}\hbox{ for infinitely many }k\bigr) =1. 
\]
Thus, by \eqref{E:pdd}, 
\[
{\mathbb P}\bigl(a_{n_k}\geq \frac{d}{3}\hbox{ for infinitely many }k\bigr) >1-\epsilon. 
\]
It follows that 
\[
{\mathbb P}\bigl(\limsup_n a_n \geq \frac{d}{3}\bigr)>1-\epsilon.
\]
Since $\epsilon>0$ can be made arbitrarily small with the fixed $d$, the claim is proved. 
\end{proof} 
This finishes the proof of the theorem. 
\end{proof}

\section{Further observations}

\setcounter{claim}{0}

We make here a few comments on the main construction of this paper. We also present some alternative probabilistic models for 
a random continuum. 
Let ${\mathcal C}(X)$ be the space of all continua that are subsets of a Polish space $X$. Equip ${\mathcal C}(X)$ 
with the topology induced by the Hausdorff metric making ${\mathcal C}(X)$ into a Polish space; see \cite[3.12.27(b,g), 6.3.22(b)]{E}. 

\subsection{A Wiener-type measure on the space of continua} 

We rephrase Theorem~\ref{T:model} in terms of a Borel probability measure defined on the space of all subcontinua 
of ${\mathbb R}^{\N}$. 

Theorem~\ref{T:model}(i) allows us to define a Borel probability measure $\beta$ on ${\mathcal C}({\mathbb R}^{\N})$, 
which is a version of Wiener measure. We describe now how $\beta$ is constructed. 
Let ${\bar B}= (B_n)_{n\geq 1}$ be a sequence of independent two-sided Brownian motions as in Theorem~\ref{T:model}. 
For $\omega\in \Omega$, we write 
\begin{equation}\label{E:bom}
{\bar B}(\omega)
\end{equation}
for the sequence of functions $\left(B_n(\omega, \cdot)\right)_{n\geq 1}$. Theorem~\ref{T:model}(i) asserts that the set 
\[
\begin{split}
\Omega_0 = \{ \omega\in \Omega \mid\, &B_n(\omega, \cdot)\hbox{ is continuous for each }n\in\N,\hbox{ and }\\ 
&{\bar B}(\omega)\hbox{ determines a continuum}\}
\end{split}
\]
has measure $1$.  
Recall definition \eqref{e:K(f) definition}, and consider the following function defined on $\Omega_0$: 
\begin{equation}\label{E:trans}
\Omega_0\ni \omega\mapsto K\bigl({\bar B}(\omega)\bigr)\in {\mathcal C}({\mathbb R}^{\N}).
\end{equation}

\begin{lemma}
The function defined by formula \eqref{E:trans} is measurable on $\Omega_0$. 
\end{lemma}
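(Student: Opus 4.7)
The plan is to write $K(\bar B(\omega))$ as the decreasing Hausdorff-metric limit of a sequence of compacta $L_n(\omega)\subseteq\R^{\N}$ that are manifestly Borel functions of $\omega$, and then use that the pointwise limit of Borel maps into a Polish space is Borel. The preparatory measurability is routine: $\omega\mapsto B_n(\omega,\cdot)$ is Borel from $\Omega$ into the Polish space $C(\R,\R)$ endowed with the topology of uniform convergence on compacta; composition is continuous in this topology, and for any fixed compact interval $J$ the evaluation $f\mapsto f(J)$ is continuous into $\mathcal C(\R)$. Hence each $\omega\mapsto(B_k\circ\cdots\circ B_{k+m})(J)$ is Borel into $\mathcal C(\R)$, and Theorem~\ref{T:model}(i) identifies $I_n(\omega)$ as the pointwise Hausdorff limit of such images, so $\omega\mapsto I_n(\omega)$ is Borel into $\mathcal C(\R)$.

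For $n\ge 1$ I set
\[
L_n(\omega)=\{(x_i)_{i\ge 1}\in\R^{\N}:x_i\in I_i(\omega)\text{ for all }i,\text{ and }B_i(\omega,x_{i+1})=x_i\text{ for }1\le i\le n\},
\]
which is closed inside the Tychonoff-compact set $\prod_i I_i(\omega)$, hence compact. Parameterizing by $(x_{n+1},x_{n+2},\ldots)$ and using the bonding equations to recover $(x_1,\ldots,x_n)$ exhibits $L_n(\omega)$ as the image of $I_{n+1}(\omega)\times\prod_{i\ge n+2}I_i(\omega)$ under a map $\Phi_n$ that is continuous in the space variable and Borel in $\omega$. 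One has $L_{n+1}(\omega)\subseteq L_n(\omega)$ and $\bigcap_n L_n(\omega)=K(\bar B(\omega))$, which is nonempty because each bonding map $B_i\colon I_{i+1}\to I_i$ is surjective by \eqref{E:sed}; since a decreasing intersection of nonempty compacta is the Hausdorff-metric limit of the sequence, $L_n(\omega)\to K(\bar B(\omega))$ pointwise on $\Omega_0$.

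The main obstacle is verifying that $\omega\mapsto L_n(\omega)$ is itself Borel into $\mathcal C(\R^{\N})$, an instance of the standard fact that the continuous image of a Borel compact-valued map under a Carath\'eodory function is Borel. I would handle it via a Castaing representation: by the Kuratowski--Ryll-Nardzewski selection theorem applied coordinatewise, one obtains countably many Borel selectors $\sigma_k(\omega)\in I_{n+1}(\omega)\times\prod_{i\ge n+2}I_i(\omega)$ whose values are dense there for every $\omega\in\Omega_0$, so $L_n(\omega)$ is the closure of $\{\Phi_n(\omega,\sigma_k(\omega)):k\in\N\}$. For any open $U\subseteq\R^{\N}$ the event $\{\omega:L_n(\omega)\cap U\neq\emptyset\}$ then equals $\bigcup_k\{\omega:\Phi_n(\omega,\sigma_k(\omega))\in U\}$, which is Borel; since such events generate the Borel $\sigma$-algebra on $\mathcal C(\R^{\N})$, measurability of $L_n$ follows, and passing to the Hausdorff limit yields the lemma.
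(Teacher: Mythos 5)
Your argument is correct, but it reaches the conclusion by a genuinely different route than the paper. The paper factors the map $\omega\mapsto K(\bar B(\omega))$ as a composition: first $\omega\mapsto \bar B(\omega)\in C(\R,\R)^{\N}$ (its Claims 1--2), then a deterministic Borel map $D\ni\bar f\mapsto K(\bar f)\in\mathcal C(\R^{\N})$ (its Claim 3), the latter obtained by showing that $\bar f\mapsto (I_n(\bar f))_n$ is Borel as a pointwise limit of continuous maps and that the inverse-limit operation $(\bar f,(I_n))\mapsto\varprojlim_n(I_n, f_n\res I_{n+1})$ is \emph{continuous} on the set $E$ of compatible pairs. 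You instead stay on $\Omega_0$ and approximate $K(\bar B(\omega))$ from outside by the partial inverse limits $L_n(\omega)$, proving Borelness of each $L_n$ via a Castaing representation and the generating sets $\{K: K\cap U\neq\emptyset\}$ of the hyperspace Borel structure; all the individual steps (compactness and connectedness of $L_n$, nonemptiness via surjectivity of the bonding maps, Hausdorff convergence of a decreasing sequence of compacta to its intersection, Borelness of pointwise limits) check out. The paper's continuity of the inverse-limit map on $E$ does for it what your $L_n$-approximation plus selectors does for you; its payoff is a clean deterministic statement about $C(\R,\R)^{\N}$ with no selection theorems, while yours is more hands-on at the level of the Borel structure on $\mathcal C(\R^{\N})$. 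Two minor points: since each $I_i(\omega)$ is an interval whose endpoints are Borel functions of $\omega$, the explicit selectors $(1-q)\min I_i(\omega)+q\max I_i(\omega)$, $q\in\mathbb{Q}\cap[0,1]$, make Kuratowski--Ryll-Nardzewski unnecessary; and the one step you dismiss as routine---measurability of $\omega\mapsto B_n(\omega,\cdot)$ into $C(\R,\R)$---is where the paper invests a separate claim, because the definition of Brownian motion it adopts only gives joint measurability of $(\omega,t)\mapsto B(\omega,t)$, so one must first check that each $B(\cdot,t)$ is measurable (via Fubini and pointwise limits along a dense set of times) before passing to $C(\R,\R)$.
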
 

\begin{proof} Let 
\[
B\colon \Omega\times {\mathbb R}\to {\mathbb R}. 
\]
be a two-sided Brownian motion. For $\omega\in \Omega$ and $t\in {\mathbb R}$, we write 
\[
B_\omega\;\hbox{ and }\; B^t
\]
for the functions $B(\omega, \cdot)$ and $B(\cdot, t)$, respectively.

Let $C({\mathbb R}, {\mathbb R})$ be the space of continuous functions from $\mathbb R$ to $\mathbb R$ equipped with 
the topology of uniform convergence on compact sets. It is a separable metric space.

Let 
\[
\Omega_1=\{ \omega\in \Omega\mid B_\omega \hbox{ is continuous}\}.
\]
Then $\Omega_1$ has measure $1$.  
\begin{claim}\label{Cl:one} 
The function 
\[
\Omega_1\ni\omega \mapsto B_\omega\in C({\mathbb R}, {\mathbb R})
\]
is measurable.
\end{claim}

\noindent {\em Proof of Claim~\ref{Cl:one}.} Let 
\[
T= \{ t\in {\mathbb R}\mid B^t  \hbox{ is measurable}\}.
\]
First we prove that $T={\mathbb R}$. 
Since $B$ is a measurable function, by Fubini's Theorem, $T$ has full Lebesgue measure, in particular, it is dense in 
$\mathbb R$. 
Let $t\in {\mathbb R}$ be arbitrary, and let $t_k$, $k\in \N$, be a sequence of elements of $T$ convergent to $t$. Then, by the definition of 
$\Omega_1$, we have that 
\[
B^{t_k}\res \Omega_1\to B^t\res \Omega_1 \hbox{ pointwise}, \hbox{ as }k\to\infty.
\]
Thus, $B^t\res \Omega_1$ is measurable, and, since $\Omega_1$ has measure $1$, $B^t$ is measurable, that is, $t\in T$.
Now, $T={\mathbb R}$ implies that, for all real numbers $t$ and $r\leq s$, the set 
\begin{equation}\label{E:onet}
\{ \omega\in \Omega\mid r\leq B^t(\omega)\leq s\}\hbox{ is measurable.}
\end{equation}

To prove the claim, it suffices to show that, for a compact set $K\subseteq {\mathbb R}$ and continuous functions 
$f,g\colon K\to {\mathbb R}$ with $f\leq g$, the set 
\begin{equation}\label{E:metw}
\{ \omega\in \Omega_1\mid f(t) \leq  B_\omega(t) \leq  g(t)\}
\end{equation}
is measurable. Let $Q\subseteq K$ be countable and dense in $K$. Then, by continuity of $f$, $g$, and $B_\omega$ for 
$\omega\in \Omega_1$, the set \eqref{E:metw} is equal to 
\[
\bigcap_{t\in Q} \{ \omega\in \Omega_1\mid f(t) \leq B^t(\omega)\leq g(t)\},
\]
which is measurable by \eqref{E:onet}, and the claim follows. 

\smallskip

Let ${\bar B}= (B_n)$ be a sequence of independent two-sided Brownian motions on $\Omega$. Then 
\[
\Omega_2  =\{ \omega\in \Omega\mid (B_n)_\omega \hbox{ is continuous, for each }n\in {\mathbb N}\}
\]
has measure $1$ as the intersection of countably many sets of measure $1$. Recall the definition 
\eqref{E:bom} of ${\bar B}(\omega)$ and note that the following claim is an immediate consequence of 
Claim~\ref{Cl:one}. (Independence of the $B_n$-s, $n\in \N$, is not used here.) 
\begin{claim}\label{Cl:two}
The function 
\[
\Omega_2 \ni\omega \mapsto {\bar B}(\omega) \in C({\mathbb R}, {\mathbb R})^\N 
\]
is measurable.
\end{claim} 

Define now 
\[
D = \{ {\bar f}\in C({\mathbb R}, {\mathbb R})^\N\mid {\bar f} \hbox{ determines a continuum}\}. 
\]
We adopt the convention that ${\bar f}=(f_n)$. 

Recall that a function from a metric space to a metric space is called {\bf Borel} if preimages of open sets belong to the $\sigma$-algebra 
of Borel sets; more information on Borel functions can be found in \cite[Section~11C]{Ke}. 
\begin{claim}\label{Cl:three}
The function 
\[
D\ni {\bar f} \mapsto K({\bar f}) \in {\mathcal C}({\mathbb R}^\N)
\]
is Borel. 
\end{claim}

\noindent{\em Proof of Claim~\ref{Cl:three}.}  
Let
\[
E= \{ \left({\bar f}, (I_n)\right)\in C({\mathbb R}, {\mathbb R})^\N\times {\mathcal C}({\mathbb R})^\N \mid f_n(I_{n+1})= I_n\hbox{ for each }n\in\N\}. 
\]
It is easy to check that the function 
\begin{equation}\label{E:lala}
E\ni \left({\bar f}, (I_n)\right) \mapsto \varprojlim (I_n, f_n\res I_{n+1}) \in {\mathcal C}({\mathbb R}^\N)
\end{equation}
is continuous. 

For ${\bar f}\in D$, for each $n\in\N$, there exists an interval $I_n({\bar f})$ such that 
\begin{equation}\label{E:lim}
I_n({\bar f})=\lim_k (f_n\circ \cdots \circ f_{n+k})([-1,1]).
\end{equation}
For a fixed $k\in \N$, the function 
\begin{equation}\label{E:cont}
D\ni {\bar f}\mapsto (f_n\circ \cdots \circ f_{n+k})([-1,1]) \in {\mathcal C}({\mathbb R})
\end{equation}
is clearly continuous hence, by \eqref{E:lim}, the function 
\[
D\ni {\bar f}\mapsto I_n({\bar f}) \in {\mathcal C}({\mathbb R})
\]
is Borel as a pointwise limit of a sequence of continuous functions \eqref{E:cont}. 
It follows that the function 
\[
D\ni {\bar f}\mapsto \left(I_n({\bar f})\right) \in {\mathcal C}({\mathbb R})^\N
\]
is Borel, from which we get that 
\begin{equation}\label{E:rara}
D\ni {\bar f}\mapsto \left( {\bar f}, \left(I_n({\bar f})\right)\right) \in E 
\end{equation}
is Borel as well. Thus, the function in the statement of the claim is Borel as the composition of \eqref{E:rara} with \eqref{E:lala}, and the claim follows.

\smallskip

Now, clearly $\Omega_0\subseteq \Omega_2$ and, for each 
$\omega\in \Omega_0$, ${\bar B}(\omega)$ is an element of $D$. Thus, the function in \eqref{E:trans} is measurable 
as the composition of the functions from Claims~\ref{Cl:two} and \ref{Cl:three}. 
\end{proof}

The above lemma allows us to transfer the probability measure on $\Omega$ to ${\mathcal C}({\mathbb R}^{\N})$ using function \eqref{E:trans}. 
We denote this transferred measure by $\beta$. Theorem~\ref{T:model}(ii)  
is equivalent to asserting that the set 
\[
\{ K\in  {\mathcal C}({\mathbb R}^{\N})\mid K\hbox{ is a non-degenerate chainable indecomposable continuum}\}
\]
is of full measure with respect to $\beta$. 

It would be interesting to see if $\beta$ is in some sense canonical. 

\subsection{Bing's question}\label{Su:bing}

Bing's question from \cite{B} can now be stated precisely. With the notation from Theorem~\ref{T:model}, 
is it the case that $K({\bar B})$ is the pseudoarc with probability 1? Or, equivalently, in terms of 
the measure $\beta$ defined above, is it the case that 
\[
\beta\bigl( \{ K\in  {\mathcal C}({\mathbb R}^{\N})\mid K\hbox{ is the pseudoarc}\}\bigr) = 1?
\]
By Bing's characterization of the pseudoarc \cite{B51}, the above questions are equivalent to asking 
whether $K({\bar B})$ is hereditarily indecomposable with probability 1.

In the topological context, as opposed to the measure theoretic context considered in this paper, prevalence of the pseudoarc has been known for a while. 
By \cite{B51}, the set of continua homeomorphic to the pseudoarc is comeager in the space of continua ${\mathcal C}([0,1]^{\N})$. 
Similarly for inverse limits. Let $C_s([0,1],[0,1])$ be the space of all continuous surjections from $[0,1]$ to itself. It is a Polish space when equipped with the uniform 
convergence topology. By \cite{BKU00}, the set of sequences $(f_n)\in C_s([0,1], [0,1])^{\N}$ such that $\varprojlim_n ([0,1], f_n)$ 
is homeomorphic to the pseudoarc is comeager in $C_s([0,1], [0,1])^{\N}$.

\subsection{Comments on other models for a random continuum}

We present here some other possible ways of modeling a random chainable continuum. At this point, we find them less interesting than the way studied in this paper as 
they involve certain arbitrary choices and do not involve unaltered Brownian motion (or unaltered random walk). 

{\bf 1.} One considers a sequence of independent Brownian motions $(B_n)$ and modifies them to reflected Brownian motions $(|B_n|)$. (For the reflected 
Brownian motion see \cite[Section~2.3]{MP10}.) Then one 
chooses a sequence of random variables $(T_n)$ so that $0<T_n<\infty$ and $|B_n|([0, T_{n+1}]) = [0,T_n]$ almost surely. Finally, one defines the random continuum 
\[
\varprojlim_n\, ([0, T_n],\, |B_n|\res [0, T_{n+1}]). 
\]

{\bf 2.} We recast the construction from point 1 above making it combinatorial. This is done by using 
the random walk, instead of the Brownian motion, and the point of view from \cite{IS}. We make a concrete choice for the sequence of random variables $(T_n)$ 
and provide some detailed arguments. 

For $n\in \N$, let $[n]=\{ 1, \dots , n\}$. 
A {\bf walk} is a function $f\colon [m]\to [n]$, $m,n\in {\mathbb N}$, $m,n\geq 1$, such that $f(1)=1$, $f$ is surjective, and 
\[
|f(x)-f(x+1)|\leq 1
\]
for all $x\in [m-1]$. 

We produce a sequence $(k_n)$ of elements of $\N$ and a sequence of walks $$f_n\colon [k_{n+1}]\to [k_n].$$ 
Set $k_0=2$. Assume $k_n$ is given. We define $f_{n}$ by setting $f_{n}(1)=1$ and requiring that if 
$f_{n}(2x - 1)$ is given, then $f_n(2x) = f_n(2x - 1)$, and 
\\ if $1<f_{n}(2x)<k_n$, then 
\begin{enumerate}
\item[---]$f_{n}(2x+1) = f_{n}(2x)+1$ with probability $1/2$, 

\item[---]$f_{n}(2x+1) = f_{n}(2x)-1$ with 
probability $1/2$; 
\end{enumerate} 

\noindent if $f_{n}(2x)=1$, then $f_{n}(2x+1) = 2$;
\\ if $f_{n}(2x)=k_n$, then $f_{n}(2x+1) = k_n-1$.  

We stop this process defining $f_{n}$ when we reach $x_0$ such that $f_{n}(x_0)= k_n$. 
We let $k_{n+1}=x_0$. This stopping procedure is somewhat arbitrary and can probably be modified without changing 
the fundamental properties of the model. So each $f_n$ is a truncated reflected random walk on $[k_n]$. 

The three claims below and definition \eqref{E:der} give a description of the probabilistic model. 

The following claim is a consequence of, for example, \cite[Theorem~5.4]{MP10}.

\begin{claim*} With probability 1, the sequence $(k_n)$ is defined.
\end{claim*}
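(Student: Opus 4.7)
The plan is induction on $n$. Since $k_0 = 2$ is prescribed deterministically, it suffices to show that, conditional on $k_n$ being defined, the next value $k_{n+1}$ is defined with probability $1$; the countable intersection of such full-measure events then yields the claim.

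Fix $n$ and condition on a value of $k_n$. Inspecting the recursive definition, the odd-indexed values $f_n(1), f_n(3), f_n(5), \ldots$ form a simple symmetric random walk on the state space $\{1, 2, \ldots, k_n\}$ started at $1$ and reflected at the two endpoints, while the even-indexed values simply repeat the preceding odd one. The stopping rule $f_n(x_0) = k_n$ is therefore the same as saying that this reflected walk first hits $k_n$; thus ``$k_{n+1}$ is defined'' is exactly the event that the reflected walk reaches $k_n$ in finite time.

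To conclude, I would invoke the fact that this reflected walk is an irreducible Markov chain on the finite state space $\{1, \ldots, k_n\}$, so the hitting time of any state from any other is almost surely finite (in fact of finite expectation). Equivalently, as the hint to cite \cite[Theorem~5.4]{MP10} suggests, one can couple with the unreflected simple random walk on $\mathbb{Z}$, whose recurrence guarantees that the level $k_n$ is visited in finite time almost surely. Either way, $k_{n+1}$ is defined with probability $1$.

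I do not expect a real obstacle. The only point deserving a line of care is that the random bits driving $f_n$ are independent of everything that determined $k_0, \ldots, k_n$, so the conditioning in the inductive step does not interact with the hitting-time argument, and a single application of the Borel--Cantelli style intersection finishes the proof.
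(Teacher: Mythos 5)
Your argument is correct and matches the paper's intent: the paper disposes of this claim by citing a recurrence theorem for simple random walk, and your inductive reduction to the almost-sure finiteness of the hitting time of $k_n$ for the reflected walk on $\{1,\dots,k_n\}$ is exactly the argument being invoked, just written out. No gaps worth noting.
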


We view $[m]$, for $m\in \N$, $m\geq 1$, as a finite discrete topological space with $m$ points. 
Consider the inverse limit $\varprojlim_n ([k_n], f_n)$ of topological spaces. Define the following relation $R$ on it. For $(x_n), (y_n)\in \varprojlim_n ([k_n], f_n)$, 
let 
\begin{equation}\label{E:der}
(x_n)\,R\,(y_n) \Leftrightarrow \forall n\, |x_n-y_n|\leq 1. 
\end{equation}

\begin{claim*} With probability 1, $R$ is an equivalence relation that is 
compact when seen as a subset of the product $\varprojlim_n ([k_n], f_n)\times \varprojlim_n ([k_n], f_n)$. Each equivalence class of $R$ has at most two elements. 
\end{claim*}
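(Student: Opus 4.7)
The plan is to handle the easy parts first and then focus on transitivity, which is the substantive assertion. Reflexivity and symmetry of $R$ are immediate from its definition. For compactness, I would observe that $R$ is the intersection, over $n\in\N$, of the preimages under the $n$-th coordinate projection of the clopen sets $\{(a,b)\in [k_n]^2 : |a-b|\leq 1\}$, so $R$ is closed in the compact Hausdorff space $\varprojlim_n([k_n], f_n)\times \varprojlim_n([k_n], f_n)$, hence compact.

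For the bound on the size of equivalence classes I would rely on the standard fact that two distinct sequences in $\varprojlim_n([k_n], f_n)$ must differ at every sufficiently large coordinate: if $x_{m+1}=y_{m+1}$, then $x_m=f_m(x_{m+1})=f_m(y_{m+1})=y_m$. Hence, if three pairwise distinct sequences were $R$-related, they would be pairwise distinct at a single large coordinate $n$, producing three distinct integers in $[k_n]$ with pairwise distance at most one, which is impossible. So once transitivity is established, each $R$-class has at most two elements.

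The main step is transitivity, which I claim holds deterministically on the almost sure event, guaranteed by the previous claim, that every $k_n$ is defined. Suppose towards contradiction that $(x_n),(y_n),(z_n)$ satisfy $(x_n)\,R\,(y_n)$ and $(y_n)\,R\,(z_n)$ but $|x_{n_0}-z_{n_0}|=2$ for some $n_0$. Propagating forward as in the previous paragraph gives some $M$ such that $x_m,y_m,z_m$ are pairwise distinct for all $m\geq M$, and the constraints $|x_m-y_m|,|y_m-z_m|\leq 1$ then force these to be three consecutive integers with $y_m$ in the middle, i.e., $\{x_m,z_m\}=\{y_m-1,y_m+1\}$.

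I would close by exploiting the only deterministic feature of the walk $f_m$ built into its construction, namely the identity $f_m(2x)=f_m(2x-1)$. Fix $m\geq M$ and apply this to $y_{m+1}$, which necessarily satisfies $1<y_{m+1}<k_{m+1}$ since $y_{m+1}\pm 1\in[k_{m+1}]$. If $y_{m+1}$ is even, writing $y_{m+1}=2x$ gives $f_m(y_{m+1})=f_m(y_{m+1}-1)$; if $y_{m+1}$ is odd, writing $y_{m+1}=2x-1$ gives $f_m(y_{m+1})=f_m(y_{m+1}+1)$. Either way, $y_m=f_m(y_{m+1})$ coincides with one of $x_m,z_m$, contradicting their pairwise distinctness. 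The main obstacle, which I expect to be conceptual rather than technical, is recognizing that the deterministic pairing $f_m(2x)=f_m(2x-1)$ — rather than any probabilistic estimate on the random walk — is exactly the feature that prevents a three-element $R$-class; once this is identified the contradiction is essentially forced.
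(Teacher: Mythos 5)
Your proof is correct and takes essentially the same approach as the paper: the decisive step in both is that on the almost sure event where all $k_n$ are defined, a three-element configuration forces $\{x_m,z_m\}=\{y_m-1,y_m+1\}$ for all large $m$, and the deterministic pairing $f_m(2x)=f_m(2x-1)$ applied to $y_{m+1}$ (according to its parity) makes $y_m$ collide with $x_m$ or $z_m$. Your treatment of compactness and of the propagation of disagreement to all large coordinates just makes explicit what the paper leaves implicit.
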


To prove the claim above, note first that $R$ is clearly reflexive and symmetric and it is obviously compact. So, to see the remainder of the claim it will suffice 
to show that each $(y_n)\in \varprojlim_n ([k_n], f_n)$ is $R$-related to at most one element of $\varprojlim_n ([k_n], f_n)$ distinct 
from $(y_n)$. Towards a contradiction, assume otherwise, that is, assume that, with positive probability, there are $(x_n), (y_n), (z_n)\in \varprojlim_n ([k_n], f_n)$ such that 
\begin{equation*}
(x_n)\not=(y_n)\not= (z_n)\not= (x_n)\;\hbox{ and }\;(x_n)\, R\, (y_n)\, R\, (z_n). 
\end{equation*}
These relationships imply that, for large enough $n$, we have 
\begin{equation}\label{e:xyz}
x_n+1=y_n= z_n-1\;\hbox{ or }\; z_n+1=y_n=x_n-1. 
\end{equation}
Thus $1 < y_n < k_n$ for large enough $n$. Hence, using the definition of $f_n$, depending on the parity of $y_{n+1}$, either $f_n(y_{n+1}+1)$ or $f_n(y_{n+1}-1)$ is equal to $f_n(y_{n+1}) = y_n$ for large enough $n$. This means, using \eqref{e:xyz}, that either $f_n(x_{n+1}) = x_n$ or $f_n(z_{n+1}) = z_n$ is equal to $y_n$, contradicting \eqref{e:xyz} for large enough $n$. 

It follows from the above claim that, with probability $1$, $R$ is a compact equivalence relation whose equivalence classes have at most two elements. 
Thus, 
\begin{equation}\label{E:quot}
\bigl( \varprojlim_n\, ([k_n], f_n)\bigr)/R
\end{equation}
with the quotient topology is a compact metric space. 

\begin{claim*} With probability 1, $\varprojlim_n ([k_n], f_n)/R$ is a continuum. 
\end{claim*}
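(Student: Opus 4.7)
Proof plan. By the previous claim, the quotient $Z = \varprojlim_n([k_n],f_n)/R$ is a compact metric space with probability $1$, so only connectedness remains to be shown (deterministically, under the a.s.\ conclusions of the previous two claims). The strategy is to exhibit $Z$ as the continuous image of a connected continuum obtained by ``filling in'' the discrete fibers $[k_n]$ by arcs.

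For each $n$, extend $f_n$ to a piecewise linear continuous surjection $\tilde f_n\colon [1,k_{n+1}]\to[1,k_n]$ by linearly interpolating between consecutive integer nodes. Then $\tilde X := \varprojlim_n([1,k_n],\tilde f_n)$ is an inverse limit of arcs with continuous surjective bonding maps, hence a continuum. Define $\psi\colon \tilde X \to Z$ as follows: given $\bar t = (t_n)\in\tilde X$, a short case check on the three cases of the walk definition shows that $f_n(\{\lfloor t_{n+1}\rfloor,\lceil t_{n+1}\rceil\})\subseteq\{\lfloor t_n\rfloor,\lceil t_n\rceil\}$, so a K\"onig-type compactness argument inside the finite-branching product $\prod_n\{\lfloor t_n\rfloor,\lceil t_n\rceil\}$ produces a thread $\bar x \in Y := \varprojlim_n([k_n],f_n)$ with $x_n \in\{\lfloor t_n\rfloor,\lceil t_n\rceil\}$ for every $n$. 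Any two such threads $\bar x,\bar x'$ satisfy $|x_n-x_n'|\leq 1$ for all $n$, hence are $R$-equivalent, and we set $\psi(\bar t):=[\bar x]_R$. The natural inclusion $Y\hookrightarrow\tilde X$ then shows that $\psi$ is surjective.

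It remains to establish continuity of $\psi$; since $\tilde X$ is compact and $Z$ is compact Hausdorff, it suffices to verify that $\psi$ has closed graph. If $\bar t^{(k)}\to\bar t$ in $\tilde X$ and $\psi(\bar t^{(k)})\to\alpha$ in $Z$, choose representatives $\bar x^{(k)}\in Y$ and, using compactness of $Y$, extract a convergent subsequence $\bar x^{(k_j)}\to \bar x$; then $\alpha=[\bar x]_R$. For any representative $\bar y$ of $\psi(\bar t)$, a coordinatewise check (splitting on whether $t_n$ is an integer) shows $|x_n-y_n|\leq 1$, hence $\bar x\,R\,\bar y$ and $\alpha=\psi(\bar t)$. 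I expect the main obstacle to be exactly this continuity check at $\bar t$ whose coordinates are integers: since $t_n^{(k)}$ may approach an integer $t_n$ from either side, the limit coordinate $x_n$ can land in $\{t_n-1,t_n,t_n+1\}$, and one must use the width-one slack built into the definition of $R$ to absorb this ambiguity. Once continuity is secured, $Z=\psi(\tilde X)$ is the continuous image of a continuum and is therefore connected, completing the proof.
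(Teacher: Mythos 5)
Your proof is correct, but it takes a genuinely different route from the paper's. The paper shows connectedness of the quotient directly: given non-empty clopen sets $U,V$ covering $X=\varprojlim_n([k_n],f_n)$, compactness reduces membership in them to a condition on a single coordinate $n_0$, one picks $x\in A$ and $y\in B$ with $|x-y|\leq 1$, and then uses the two defining properties of walks (unit steps and surjectivity) to extend this pair to two full threads that remain within distance $1$ of each other at every level, hence are $R$-related; this rules out any partition of $X$ into two non-empty, disjoint, $R$-saturated clopen sets, i.e., any disconnection of $X/R$. You instead build the piecewise-linear inverse limit $\tilde X$ of arcs, which is a continuum for free, and transport its connectedness through the surjection $\psi\colon\tilde X\to Z$; the burden then shifts to checking that $\psi$ is well defined and continuous, which you carry out correctly (the closed-graph argument is sound given that the previous claim makes $Z$ compact Hausdorff, and the only delicate coordinate --- where $t_n$ is an integer and the limit $x_n$ may land anywhere in $\{t_n-1,t_n,t_n+1\}$ --- is indeed absorbed by the width-one slack in the definition of $R$). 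Both arguments ultimately rest on the same two facts about walks. The paper's is shorter and purely combinatorial; yours costs more verification but yields a bit more, namely an explicit presentation of $Z$ as a continuous image of a chainable continuum, close in spirit to the pre-space viewpoint of \cite{IS} that the paper cites in introducing this model.
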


To see this claim, set $X= \varprojlim_n ([k_n], f_n)$. We need to show that, for any two non-empty, closed-and-open sets $U, V$ such that 
\[
U\cup V = X, 
\]
there exist sequences $(x_n)$ and $(y_n)$ with 
\begin{equation}\label{E:uv} 
(x_n) \in U,\, (y_n)\in V, \hbox{ and }\, (x_n)\, R\, (y_n). 
\end{equation}
Fix such $U$ and $V$. By compactness, there exists $n_0$ and sets $A, B$ such that 
\[
A\cup B = [k_{n_0}],\; U = \{ (x_n) \in X\mid x_{n_0}\in A\}\;\hbox{ and }\;V = \{ (y_n) \in X\mid y_{n_0}\in B\}. 
\]
Clearly, there are $x\in A$ and $y\in B$ with $|x-y|\leq 1$. Since each function in the sequence $(f_n)$ is a walk, one easily finds sequences 
$(x_n), (y_n)\in X$ such that $x_{n_0}=x$, $y_{n_0}=y$, and $|x_n-y_n|\leq 1$ for all $n$. It follows that \eqref{E:uv} holds for these sequences, and 
the claim is proved. 

The claim above allows us to see the space \eqref{E:quot} as a random continuum. 

\subsection*{Acknowledgement} We would like to thank Lionel Levine for pointing us in the direction of paper \cite{CK14}.


\begin{thebibliography}{9}
  \bibitem{B51} R.\,H.\;Bing, {\em Concerning hereditarily indecomposable continua}, Pacific J. Math. 1 (1951), 43--51.
 
  \bibitem{B} R.\,H.\;Bing, {\em The pseudo-arc}, in {\em Summary of Lectures and Seminars}, Summer Institute on Set Theoretic Topology, Madison 1955, revised 1958,  
  pp.\;72--74; in {\em The Collected Papers of R.H. Bing}, American Mathematical Society, 1988, pp.\;393--395.
  
  \bibitem{BKU00} L.\;Block, J.\;Keesling, V.\,V.\;Uspenskij, {\em Inverse limits which are the pseudoarc}, Houston J. Math. 26 (2000), 629--638.
  
  \bibitem{CK14} N.\;Curien, T.\;Konstantopoulos, {\em Iterating Brownian motions, ad libitum}, J. Theoret. Probab. 27 (2014), 433--448.
  
  \bibitem{E} R.\;Engelking, {\em General Topology}, Sigma Series in Pure Mathematics, 6, Heldermann Verlag, 1989.
  
  \bibitem{IS} T.\;Irwin, S.\;Solecki, {\em Projective Fra{\"i}ss{\'e} limits and the pseudo-arc}, Trans. Amer. Math. Soc. 358 (2006), 3077--3096.
    
   \bibitem{Ke} A.\,S.\;Kechris, {\em Classical Descriptive Set Theory},  Graduate Texts in Mathematics, 156, Springer Verlag, 1995. 
   
  \bibitem{MP10} P.\;M{\"o}rters, Y.\;Peres, \emph{Brownian Motion}, Cambridge Series in Statistical and Probabilistic Mathematics, 30, Cambridge University Press, 2010.
  
  \bibitem{N} S.\,B.\;Nadler, {\em Continuum Theory. An Introduction}, Monographs and Textbooks in Pure and Applied Mathematics, 158, Marcel Dekker, 1992.
  
  \bibitem{P} J.\;Prajs, {\em Open problems in the study of homogeneous continua}, plenary talk at the 52nd Spring Topology and Dynamical Systems Conference, 
  Auburn University, March 17, 2018. 
\end{thebibliography}
\end{document}